\providecommand{\tabularnewline}{\\}
\providecommand{\algorithmname}{Algorithm}
\theoremstyle{remark}
\newtheorem{rem}{\protect Remark}
\theoremstyle{plain}
\newtheorem{lem}{Lemma}
\newlist{casenv}{enumerate}{4}
\setlist[casenv]{leftmargin=*,align=left,widest={iiii}}
\setlist[casenv,1]{label={{\itshape\ Case} \arabic*.},ref=\arabic*}
\setlist[casenv,2]{label={{\itshape\ Case} \roman*.},ref=\roman*}
\setlist[casenv,3]{label={{\itshape\ Case\ \alph*.}},ref=\alph*}
\setlist[casenv,4]{label={{\itshape\ Case} \arabic*.},ref=\arabic*}
\theoremstyle{plain}
\newtheorem{cor}{Corollary}
\title{Recognizing generalized Petersen graphs in linear time}
\author[1]{Matja\v z Krnc}
\author[2]{Robin J. Wilson}
\affil[1]{University of Primorska, Slovenia, and University of Salzburg, Austria. 
\protect\\ E-mail: \texttt{matjaz.krnc@upr.si}.
}
\affil[2]{The Open University, Milton Keynes, and 
 the London School of Economics, UK. 
 \protect\\ E-mail: \texttt{r.j.wilson@open.ac.uk}.
 }
\date{}
\begin{document}
\maketitle

\begin{abstract}
By identifying a local property which structurally classifies any edge, we show that the family of generalized Petersen graphs can be recognized in linear time.%
\end{abstract}

\bigskip{}
\global\long\def\gp#1{G\left(#1\right)}%
\global\long\def\P{\mathrm{GP}}%
\global\long\def\I#1{E_{I}\left(#1\right)}%
\global\long\def\O#1{E_{J}\left(#1\right)}%
\global\long\def\S#1{E_{S}\left(#1\right)}%
\global\long\def\nicefrac#1#2{#1/#2}%
\global\long\def\C{\mathcal{C}}%
The \emph{generalized Petersen graphs}, introduced by Coxeter \citep{coxeter1950self}
and named by Watkins \citep{watkins1969theorem}, are cubic graphs
formed by connecting the vertices of a regular polygon to the corresponding
vertices of a star polygon. Various aspects of their structure have
been extensively studied. Examples include identifying generalized
Petersen graphs that are Hamiltonian \citep{alspach1983classification,alspach1981result,castagna1972every},
hypo-Hamiltonian~\citep{bondy1972variations}, Cayley \citep{nedela1995generalized,saravzin1997note},
or partial cubes~\citep{klavvzar2003partial}, and finding their
automorphism group \citep{frucht1971groups} or determining isomorphic
members of the family \citep{petkovvsek2009enumeration}. Additional
aspects of the mentioned family are well surveyed in \citep{wilson92,holton1993petersen} while notable recent advances in the field can be found in \citep{,MR3906715,MR3991621,MR4040935,MR4041982,MR3878281,MR3967513, MR4014153}. 

\begin{figure}[bh]
\begin{centering}
\includegraphics[scale=0.6]{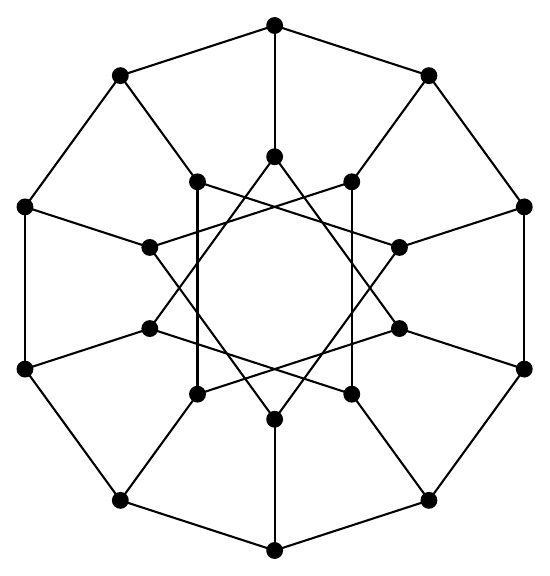}
\par\end{centering}
\caption{The generalized Petersen graph $G(10,3)$, also known as the Desargues
graph.}
\end{figure}

In this paper we give a linear-time recognition algorithm for the
family of generalized Petersen graphs. In particular, we identify
a local property which structurally classifies any edge, whenever
our graph is a generalized Petersen graph. We start by providing the
necessary definitions and the analysis of $8$-cycles; after the preliminaries
in Section~\ref{sec:Preliminaries} we describe $8$-cycles in Section~\ref{subsec:Description-of--cycles}.
In Section~\ref{sec:Recognizing-GP} we introduce our main lemma,
and describe and analyze the recognition procedure. At the end we
mention how our Algorithm~\ref{alg:1} behaves when the input graph
is small.

\section{Preliminaries\label{sec:Preliminaries}}

We follow the notations of Watkins \citep{watkins1969theorem} for
the family $\P$ of generalized Petersen graphs. For given integers
$n$ and $k<\nicefrac n2$, we define the generalized Petersen graph
$\gp{n,k}$ as the graph with vertex-set $\left\{ u_{0},u_{1},\dots,u_{n-1},w_{0},w_{1},\dots,w_{n-1}\right\} $
and edge-set $\left\{ u_{i}u_{i+1}\right\} \cup\left\{ u_{i}w_{i}\right\} \cup\left\{ w_{i}w_{i+k}\right\} $,
for $i=0,1,\dots,i-1$, where all subscripts are taken modulo $n$.
We partition the edges of $\gp{n,k}$ as
\begin{itemize}
\item the edges $\O G$ from the \emph{outer rim} (of type $u_{i}u_{i+1}$)
which form a cycle of length $n$;
\item the edges $\I G$ from the \emph{inner rims} (of type $w_{i}w_{i+k}$)
inducing $\mathrm{gcd}(n,k)$ cycles of length $\nicefrac n{\mathrm{gcd}(n,k)}$;
\item the \emph{spokes} $\S G$ (of type $u_{i}w_{i}$) which correspond to a perfect
matching in $\gp{n,k}$.
\end{itemize}
\label{def:sigma}For each edge $e\in E(G)$, let $\sigma_{G}(e)$
be the number of $8$-cycles containing $e$, and let $\mathcal{P}_{G}$
be a partition of the edge-set of $G$, corresponding to the values
of $\sigma_{G}\left(e\right)$, for $e\in E\left(G\right)$. The mapping
$\sigma$ plays a crucial role in the structural classification of
edges in Section~\ref{subsec:Description-of--cycles}; for example,
any spoke $e$ of the Petersen graph $\gp{5,2}$ has the value $\sigma_{\gp{5,2}}\left(e\right)=8$
(see Figure~\ref{fig:different-8-cycles}).
\begin{figure}[htb]
\begin{centering}
\includegraphics[height=3cm]{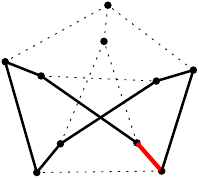}$\qquad$\includegraphics[height=3cm]{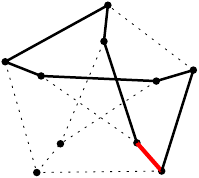}$\qquad$\includegraphics[height=3cm]{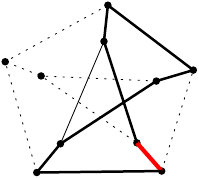}$\qquad$\includegraphics[height=3cm]{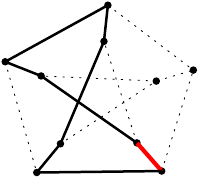}\bigskip{}
\includegraphics[height=3cm]{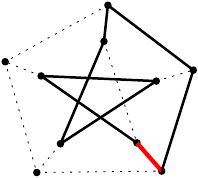}$\qquad$\includegraphics[height=3cm]{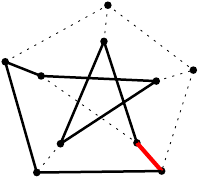}$\qquad$\includegraphics[height=3cm]{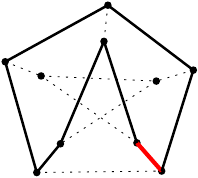}$\qquad$\includegraphics[height=3cm]{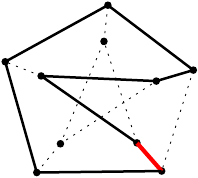}
\par\end{centering}
\caption{\label{fig:different-8-cycles}The Petersen graph $\protect\gp{5,2}$
admits 8-cycles of types $\protect\C_{2},\protect\C_{4}$, and $\protect\C_{7}$,
and any of its spokes $e$ has the value $\sigma_{\protect\gp{5,2}}\left(e\right)=8$.}
\end{figure}

\noindent The rotational symmetry of generalized Petersen graphs implies the following remark:
\begin{rem}
\label{rem:P}
The edges within any of the sets $\left\{ \O G,\S G,\I G\right\} $
share the same value of $\sigma_{G}$.
\end{rem}
For any $G\in\P$, we define $\sigma_{j}$ to be $\sigma_{G}(e)$,
where $e$ is a member of $\O G$; $\sigma_{s}$ and $\sigma_{i}$
are defined similarly. 

\begin{sloppypar}Given a graph $G$, we use the standard neighborhood
notations for vertices $N_{k}(v)=$ $\{ w\in V(G)\mid d_{G}(v,w)=k\} $
or edges $N_{k}(e)=\{ e'\in E(G)\mid d_{L(G)}(e,e')=k\} $,
where $L(G)$ is the line graph of $G$ and $d_{G}(v,w)$ is the distance
between $v$ and $w$ in $G$. The notation $G\left[S\right]$ corresponds
to the subgraph of $G$ induced by $S$, where $S\subseteq V(G)$
or $S\subseteq E(G)$. Graphs of order at most $80$ are considered
to be \emph{small }graphs, and can clearly be recognized in constant
time, so in what follows we assume that our graphs are not small --
that is, are \emph{large}. In this paper we study the large members
of $\P$ with respect to their values of $\sigma_{j},\sigma_{s},\sigma_{i}$,
and in particular determine that such a triple can correspond to only
one of eight possibilities, described below in (\ref{eq:triple-values}).
We proceed with the description of the possible $8$-cycles in $\gp{n,k}$.\end{sloppypar}
\section{A description of 8-cycles\label{subsec:Description-of--cycles}}

Given a graph $G\simeq\gp{n,k}$, fix a partitioning of $E(G)$ into
$\O G$, $\S G$, and $\I G$, as above. %
{} Define the rotation $\rho$ on $V\left(G\right)$ by 
$\rho\left(u_{i}\right)=u_{i+1}$
and $\rho\left(w_{i}\right)=w_{i+1}$, for $i\in\left[0,n-1\right]$.
Note that $\rho\in\mathrm{Aut}\left(G\right)$, so (non)edges are
mapped to (non)edges.

We call two $8$-cycles \emph{equivalent }whenever we can map one
into another by using rotations only. For a fixed $8$-cycle $C$,
we say that it has \emph{$c$-fold rotational symmetry} if there exist
precisely $c$ cycles equivalent to $C$ in $G$. Considering this
number $c$, and also the number of edges in $\O G,\S G,\I G$, we
can explicitly describe the contributions of $C$ to the global values
$\sigma_{j},\sigma_{s}$, and $\sigma_{i}$. These contributions uniquely
describe the \emph{type} of cycle $C$, and are denoted by $\delta_{j},\delta_{s}$,
and $\delta_{i}$, respectively. Whenever triples $\left(\delta_{j},\delta_{s},\delta_{i}\right)$
are used in this paper, the corresponding cycles are always explicitly
determined, or are clear from the context. We name eight specific
combinations of these triples $\left(\delta_{j},\delta_{s},\delta_{i}\right)$
in Table~\ref{tab:8cycles-contributions} below. For an example of
the above notions, see the eight cycles in Figure~\ref{fig:different-8-cycles},
which may be partitioned into three non-equivalent classes, corresponding
to the types $\C_{2},\C_{4}$, and $\C_{7}$.
\begin{table}[H]
\begin{centering}
\begin{tabular}{|c||c|c|c|c|c|c|c|c|}
\hline 
\textbf{label} & $\C_{0}$ & $\C_{1}$ & $\C_{2}$ & $\C_{3}$ & $\C_{4}$ & $\C_{5}$ & $\C_{6}$ & $\C_{7}$\tabularnewline
\hline 
\textbf{$\delta_{j}$} & 0 & 1 & 2 & 3 & 4 & 5 & 1 & 2\tabularnewline
\hline 
\textbf{$\delta_{s}$} & 0 & 2 & 2 & 2 & 2 & 2 & 2 & 4\tabularnewline
\hline 
\textbf{$\delta_{i}$} & 1 & 5 & 4 & 3 & 2 & 1 & 1 & 2\tabularnewline
\hline 
\end{tabular}
\par\end{centering}
\caption{\label{tab:8cycles-contributions}The labeling of all $8$ specific
types of $8$-cycles in $\protect\gp{n,k}$ is uniquely defined by
the corresponding values of $\left(\delta_{j},\delta_{s},\delta_{i}\right)$.
By characterizing $8$-cycles later we will see that no other type
of $8$-cycle may appear.}
\end{table}
If an $8$-cycle $C$ uses $s$ edges from $\S G$, we say that $s$
is the \emph{spoke value} of $C$. Note that the spoke value is always
$0,2$, or $4$, and that the spoke value coincides with $\delta_{s}$
only when the rotational symmetry of $C$ is $n$-fold. For the eight
specific triples of $\delta_{j},\delta_{s}$, and $\delta_{i}$ we
name the corresponding eight types of such cycles by $\left\{ \C_{i}\right\} _{i=0}^{7}$,
as described in Table~\ref{tab:8cycles-contributions}. We will see
later that no other types are possible for the generalized Petersen
graphs. 

We proceed by listing each of the possible $8$-cycles, giving its
spoke values $s$, and describing its rotational symmetries, the existence
conditions, and the corresponding values of $\delta_{j},\delta_{s}$,
and $\delta_{i}$. We summarize these results in Table~\ref{tab:8cycles}.
Watkins \citep{watkins1969theorem} has made a list of $8$-cycles
for generalized Petersen graphs, similar to our Table~\ref{tab:8cycles},
but he did not explain why his list is complete.

\begin{table}[H]
\begin{centering}
\begin{tabular}{|c|c|c|}
\hline 
type & a representative of an $8$-cycle & existence conditions\tabularnewline
\hline 
\hline 
$\C_{0}$ & $\left(w_{0},w_{k},\dots,w_{7k}\right)$ & $n=8k$ or $3n=8k$\tabularnewline
\hline 
$\C_{1}$ & $\left(u_{0},w_{0},w_{k},w_{2k},w_{3k},w_{4k},w_{5k},u_{5k}\right)$ & $n=5k\pm1$ or $2n=5k\pm1$\tabularnewline
\hline 
\multirow{2}{*}{$\C_{2}$} & $\left(u_{0},u_{1},u_{2},w_{2},w_{2+k},w_{2+2k},w_{2+3k},w_{2+4k}\right)$ & $n=4k+2$ or $2n=4k+2$\tabularnewline
\cline{2-3} 
 & $\left(u_{2},u_{1},u_{0},w_{0},w_{k},w_{2k},w_{3k},w_{4k}\right)$ & $k=\nicefrac{\left(n+2\right)}4$\tabularnewline
\hline 
\multirow{3}{*}{$\C_{3}$} & $\left(u_{0},u_{1},u_{2},u_{3},w_{3},w_{\left(\nicefrac n3\right)+2},w_{\left(\nicefrac{2n}3\right)+1},w_{0}\right)$ & $k=\left(\nicefrac n3\right)-1$\tabularnewline
\cline{2-3} 
 & $\left(u_{3},u_{2},u_{1},u_{0},w_{0},w_{\left(\nicefrac n3\right)+1},w_{\left(\nicefrac{2n}3\right)+2},w_{3}\right)$ & $k=\left(\nicefrac n3\right)+1$\tabularnewline
\cline{2-3} 
 & $\left(u_{0},u_{1},u_{2},u_{3},w_{3},w_{2},w_{1},w_{0}\right)$ & $k=1$\tabularnewline
\hline 
\multirow{2}{*}{$\C_{4}$} & $\left(u_{0},u_{1},\dots,u_{4},w_{4},w_{\nicefrac{\left(n+4\right)}2},w_{0}\right)$ & $k=\left(\nicefrac n2\right)-2$\tabularnewline
\cline{2-3} 
 & $\left(u_{0},u_{1},\dots,u_{4},w_{4},w_{2},w_{0}\right)$ & $k=2$\tabularnewline
\hline 
$\C_{5}$ & $\left(u_{0},u_{\pm1},\dots,u_{\pm5},w_{\pm5},w_{0}\right)$ & $k=5$ or $k=n-5$ \tabularnewline
\hline 
$\C_{6}$ & $\left(u_{0},u_{1},w_{1},w_{\nicefrac n2},u_{\nicefrac n2},u_{\nicefrac{\left(n+2\right)}2},w_{\nicefrac{\left(n+2\right)}2},w_{0}\right)$ & $k=\left(\nicefrac n2\right)-1$\tabularnewline
\hline 
$\C_{7}$ & $\left(u_{0},u_{1},w_{1},w_{k+1},u_{k+1},u_{k},w_{k},w_{0}\right)$ & $n\geq4$\tabularnewline
\hline 
\end{tabular}
\par\end{centering}
\caption{\label{tab:8cycles}Characterization of all $8$-cycles from $\protect\gp{n,k}$.}
\end{table}
\noindent We now consider the values of $s$ in turn.

\subsection*{$8$-cycles with $s=0$.\medskip{}
}

\noindent We first note that, since $G$ is large, the cycle in the
outer rim cannot be of length $8$, and so the edge-set of any such
cycle is a subset of $\I G$, implying the necessary condition $\nicefrac n{\gcd\left(n,k\right)}=8$.
It follows that either there are no $8$-cycles with $s=0$, or there
are $\nicefrac n8$ such cycles, with $k=\nicefrac n8$ or $\nicefrac{3n}8$.
We label these cycles $\C_{0}$. Also observe that these cycles (if
they exist) have $\left(\nicefrac n8\right)$-fold rotational symmetry,
so for each edge $e\in\I G$, we have $\sigma(e)=1$.

\subsection*{$8$-cycles with $s=2$.\medskip{}
}

\noindent We next focus on the $8$-cycles that contain two edges
from $\S G$. Any such $8$-cycle contains $j$ edges in $\O G$ and
$6-j$ edges in $\I G$, and may be rotated by the $n$-fold rotational
symmetry of $G$. We further observe that $1\leq j\leq5$, and label
these $8$-cycles $\C_{j}$. It follows that $\left(\delta_{j},\delta_{s},\delta_{i}\right)=\left(j,2,6-j\right)$.
We now consider all possibilities with respect to their value $j$.
\begin{description}
\item [{$\C_{1}$}] When $j=1$, any corresponding cycle is of type $\left(u_{0},w_{0},w_{k},w_{2k},w_{3k},w_{4k},w_{5k},u_{5k}\right)$
if and only if $5k\pm1\equiv0\pmod n$.
\item [{$\C_{2}$}] Similarly, if such an $8$-cycle exists, then $4k\equiv\pm\:2\pmod n$,
and since $k<\nicefrac n2$, it follows that $k=\nicefrac{\left(n\pm2\right)}4$
or $k=\nicefrac{\left(n-1\right)}2$. In the former case $n\equiv2\pmod4$,
and in the latter case $n$ is odd.
\item [{$\C_{3}$}] If $j=3$, then $3k\equiv\pm3\pmod n$. So $3$ divides
$n$, and we have $k=1$ or $\left(\nicefrac n3\right)-1$ or $\left(\nicefrac n3\right)+1$.
\item [{$\C_{4}$}] If $j=4$, then $2k\equiv\pm4\pmod n$. Since $k<\nicefrac n2$,
there are again two possibilities: $k=2$ or $k=\nicefrac{\left(n-4\right)}2$. 
\item [{$\C_{5}$}] If $j=5$, then $k\equiv5\pmod n$.
\end{description}

\subsection*{$8$-cycles with $s=4$.\medskip{}
}

\noindent First, we consider a special type of $8$-cycle of the form
\[
\left(u_{0},u_{1},w_{1},w_{\nicefrac n2},u_{\nicefrac n2},u_{\nicefrac{\left(n+2\right)}2},w_{\nicefrac{\left(n+2\right)}2},w_{0}\right),
\]
whenever $n=2k+2$. These $8$-cycles have only $\left(\nicefrac n2\right)$-fold
rotational symmetry, so $\delta_{j}=1$, $\delta_{s}=2$, and $\delta_{i}=1$
and are labeled by $\C_{6}$.

Finally, consider the $8$-cycles with four edges from $\S G$ of
type 
\[
\left(u_{i},w_{i},w_{i+k},u_{i+k},u_{i+k+1},w_{i+k+1},w_{i+1},u_{i+1}\right),
\]
where $0\leq i\leq n-1$.  Since $G$ is assumed to be large, these
cycles are always present. Since they have $n$-fold symmetry, $\delta_{j}=2,\delta_{s}=4$,
and $\delta_{i}=2$ and are denoted by $\C_{7}$. 

\section{Recognizing generalized Petersen graphs\label{sec:Recognizing-GP}}

\noindent Using the structure of $8$-cycles in $G$, we have the
following property.
\begin{lem}
\label{lem:part-of-size-n}Let $\gp{n,k}\in\P$ be a large graph,
and let $\mathcal{P}$ be a partition of its edge-set, assigning edges
$e$ with the same value of $\sigma\left(e\right)$ to the same partition.
Then $\mathcal{P}$ contains a part of size $n$.
\end{lem}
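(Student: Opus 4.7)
My plan is to reduce the claim to showing that $\sigma_{o},\sigma_{s},\sigma_{i}$ cannot all three coincide, and then dispose of this numerical non-equality via the $8$-cycle classification from the previous section.

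By Remark~\ref{rem:P}, each of the three $n$-edge sets $\O G$, $\S G$, $\I G$ lies inside a single block of $\mathcal{P}$. Consequently every block of $\mathcal{P}$ has size in $\{n,2n,3n\}$, and a block of size exactly $n$ fails to exist if and only if $\sigma_{o}=\sigma_{s}=\sigma_{i}$. Thus it suffices to rule out the equality $\sigma_{o}=\sigma_{s}=\sigma_{i}$.

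Since $G$ is large, the $8$-cycle type $C_{7}$ is always present and contributes $(\delta_{o},\delta_{s},\delta_{i})=(2,4,2)$. Writing $S\subseteq\{C_{0},\dots,C_{6}\}$ for the remaining set of types that appear in $G$, the assumption $\sigma_{o}=\sigma_{s}=\sigma_{i}$ forces
\[
\sum_{T\in S}\bigl(\delta_{s}(T)-\delta_{o}(T),\;\delta_{s}(T)-\delta_{i}(T)\bigr)=(-2,-2).
\]
The seven candidate vectors, read off Table~\ref{tab:8cycles-contributions}, are $(0,-1),(1,-3),(0,-2),(-1,-1),(-2,0),(-3,1)$, and $(1,1)$; a direct enumeration over all $2^{7}$ subsets shows that exactly four of them sum to $(-2,-2)$: the two pairs $\{C_{1},C_{5}\},\{C_{2},C_{4}\}$ and the two $4$-subsets $\{C_{1},C_{3},C_{5},C_{6}\},\{C_{2},C_{3},C_{4},C_{6}\}$.

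To finish the proof it remains, for each of these four candidate sets $S$, to intersect the existence conditions from Table~\ref{tab:8cycles} and verify that the resulting system of linear congruences in $(n,k)$ admits no large solution. For example $\{C_{1},C_{5}\}$ forces $k=5$ together with $5k\in\{n\pm 1,\,2n\pm 1\}$, hence $n\in\{12,13,24,26\}$; and both $4$-subsets contain either $C_{5}$ or $C_{6}$, pinning $k$ to $5$ or to $n/2-1$, which collapses the joint existence condition to a handful of small values. I expect this last bookkeeping to be the only tedious part of the argument, but it is a finite check rather than a conceptual difficulty---the key insight is simply that the mandatory orbit $C_{7}$ builds in an asymmetry between the spoke edges and the rest, and no legal combination of the remaining types can balance it away once $n$ is sufficiently large.
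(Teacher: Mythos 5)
Your proof is correct and reaches the conclusion by a genuinely different decomposition of the middle step. After the identical reduction via Remark~\ref{rem:P} (a part of size $n$ fails to exist precisely when $\sigma_{o}=\sigma_{s}=\sigma_{i}$), the paper proves a stronger intermediate fact: in a large graph \emph{no two} types from $\left\{ C_{0},\dots,C_{6}\right\} $ can coexist at all, obtained by intersecting the existence conditions of every pair of types (each coexisting pair forces $n\leq40$); this yields the full triple classification~(\ref{eq:triple-values}), and the lemma follows because none of those nine triples is constant. You instead keep every coexistence pattern on the table and ask which subsets could balance the mandatory $C_{7}$ contribution $\left(2,4,2\right)$ into a constant triple; your difference vectors are computed correctly, and the enumeration does leave exactly the four subsets you name. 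Your route buys a much leaner case analysis---four targeted coexistence checks instead of all $\binom{7}{2}$ pairs---but it proves strictly less: it does not recover the classification~(\ref{eq:triple-values}), which the paper states as a structural by-product of independent interest and refers to in its introduction. Two points should be tightened before your argument is complete. First, you never actually execute the check for $\left\{ C_{2},C_{4}\right\} $; it does collapse, since $C_{4}$ forces $k=2$ or $k=\frac{n-4}{2}$, and substituting either value into $4k\equiv\pm2\pmod{n}$ gives $n\mid6$ or $n\mid10$ in every branch. Second, your balance equation tacitly assumes that a type occurring in $G$ contributes its $\left(\delta_{o},\delta_{s},\delta_{i}\right)$ exactly once; this is true for large $n$ because the alternative existence conditions for any single type in Table~\ref{tab:8cycles} are mutually exclusive there, but it deserves a sentence (the paper's own count of eight possible triples rests on the same tacit assumption).
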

\begin{proof}
Let $G=\gp{n,k}$. By Remark~\ref{rem:P}, it is enough to prove
that $\left|\mathcal{P}\right|>1$. In addition to the cycles of type
$\C_{7}$, more than one additional type of $8$-cycle may coexist
in $G$ (see existence conditions on Table~\ref{tab:8cycles}). 

We show that this can never happen in a large graph. Indeed, assume
that, together with $n$ cycles of type $\C_{7}$, two distinct additional
types of $8$-cycles exist and consider the possible $\binom{7}{2}=21$
cases. 
\begin{casenv}
\item[\emph{Case 1.}] There exist cycles of type $\C_{7},\C_{0},$ and $\C_{1}$. Then $n\geq4$,
and $n=8k$ or $3n=8k$, and $n=5k\pm1$ or $2n=5k\pm1$. But then
our graph is small.
\item[\emph{Case 2.}] There exist cycles of type $\C_{7},\C_{0},$ and $\C_{2}$. Then $n\geq4$,
and $n=8k$ or $3n=8k$, and $n=4k+2$ or $2n=4k+2$ or $k=\nicefrac{\left(n+2\right)}4$.
Again, this implies that our graph is small.
\[
\hdots
\]
\item[\emph{Case 5.}]  There exist cycles of type $\C_{7},\C_{0},$ and $\C_{1}$. Then
$n\geq4$, and $n=8k$ or $3n=8k$, and $k=5$ or $k=n-5$. Then we
could have $G\simeq\gp{40,5}$. Again our graph is small.
\[
\hdots
\]
\item[\emph{Case 21.}] There exist cycles of type $\C_{7},\C_{5},$ and $\C_{6}$. Then
$n\geq4$, and $k=5$ or $k=n-5$, and $k=\left(\nicefrac n2\right)-1$.
But then either $G\simeq\gp{12,6}$ or $G\simeq\gp{8,3}$, which is
still small.
\end{casenv}
There are at most eight distinct possibilities for the corresponding
values of $\left(\sigma_{j},\sigma_{s},\sigma_{i}\right)$. In particular,
\begin{align}
\left(\sigma_{j},\sigma_{s},\sigma_{i}\right) & \in\left\{ \left(2,4,3\right)\!,\left(3,6,7\right)\!,\left(4,6,6\right)\!,\left(5,6,5\right)\!,\left(6,6,4\right)\!,\left(7,6,3\right)\!,\left(3,6,3\right)\!,\left(2,4,2\right)\right\} ,\label{eq:triple-values}
\end{align}
 where the values above are obtained by adding $\left(2,4,2\right)$
to the possible values of $\left(\delta_{j},\delta_{s},\delta_{i}\right)$
-- see Table~\ref{tab:8cycles-contributions}. To prove this, it
is enough to observe that in the eight possible triples from (\ref{eq:triple-values}),
the values never coincide.
\end{proof}

\subsection{A recognition algorithm}

\noindent In this section we describe a simple procedure \texttt{main$(G)$},
which runs in time $O(n)$ and determines whether the input graph
$G$ is a member of $\P$. It is described in Algorithm~\ref{alg:1},
and uses an additional procedure \texttt{extend($G,U$)} that is given
in Algorithm~\ref{alg:Extend(G,U)}.

The tasks of these Algorithms~\ref{alg:Extend(G,U)}~and~\ref{alg:1}
basically correspond to identifying the vertices of the outer rim,
and checking whether this outer rim can be extended to a proper member
of $\P$. 

The input of Algorithm~\ref{alg:Extend(G,U)} consists of a connected
cubic graph $G$ of order $2n$, and the $n$-set $U$ of vertices.
A procedure \texttt{extend($G,U$)} assumes that $U$ corresponds
to either $\O G$ or $\I G$, and requires $O(n)$ time to decide
whether $G\in\P$. Indeed, once one identifies the vertices in $\O G$
by using either $U$ or its complement (up to line~\ref{alg0:set-U}),
a bijection $V(G)\rightarrow\left\{ u_{0},u_{1},\dots,u_{n-1},w_{0},w_{1},\dots,w_{n-1}\right\} $
is established and $k$ is easily determined (see line~\ref{alg0:k}).
So it is enough to check that the edges of $G$ do indeed map to the
edges of $\gp{n,k}$. 

Algorithm~\ref{alg:1} basically consists of:
\begin{enumerate}[label=\roman*.]
\item \label{enu:i} determining the values of $\sigma_{G}(e)$ for each
edge $e\in E(G)$;
\item \label{enu:ii}determining an $n$-subset of $E(G)$ which is also
a member of $\left\{ \O G,\S G,\I G\right\} $, whenever $G\in\P$; 
\item \label{enu:iii} identifying a vertex-set $U$ which is one of $\left\{ u_{i}\right\} _{i=0}^{n-1}$
or $\left\{ w_{i}\right\} _{i=0}^{n-1}$, and running \texttt{extend($G,U$)}
accordingly. 
\end{enumerate}
We comment on these three statements in turn. 
\begin{enumerate}
\item[\ref{enu:i}]  For any $e\in E(G)$, all $8$-cycles that contain $e$ lie within
its $4$-neighborhood. If $G'=G\left[\cup_{i=0}^{4}N_{i}\left(e\right)\right]$,
then $\sigma_{G}(e)=\sigma_{G'}(e)$. But since $G$ is cubic, the
order of $G'$ is bounded above by 62. So the calculation of $\sigma_{G}(e)$
takes $O(1)$ time, and the whole loop at line~\ref{alg1:loop} altogether
takes at most $O\left(\left|E(G)\right|\right)=O(n)$ time. In line~\ref{alg1:edge-classification},
according to its value of $\sigma_{G}$, each edge is classified to
the corresponding part from an edge partition $\mathcal{P}$.
\item[\ref{enu:ii}]  While line~\ref{alg1:(ii)} is trivially of constant time-complexity,
its correctness is provided by Lemma~\ref{lem:part-of-size-n}. Note
that if $\min_{U\in\mathcal{P}}\left|U\right|\neq n$, then the same
lemma allows us to return \texttt{\textbf{False}}.
\item[\ref{enu:iii}]  If $G$ is a generalized Petersen graph, it is enough to determine
a 2-factor in $G$ of order $n$, and assign it to $U$. This is trivially
satisfied if the graph selected in line~\ref{alg1:(ii)} already
corresponds to either $\O G$ or $\I G$. If $U$ (as selected in
line~\ref{alg1:(ii)}) corresponds to $\S G$, then it is a perfect
matching of order $2n$ and the largest cycle of $G-E(U)$ clearly
corresponds to $\O G$. 
\end{enumerate}

\subsection{Recognizing small graphs}

\noindent It is clear that for graphs on at most $80$ vertices, the
membership of $\P$ may theoretically be determined in a constant
time. However, the task may not be easy in practice. We now argue
that Algorithm~\ref{alg:1} fails for precisely ten members of $\P$,
and that it can safely be used on any graphs of order \emph{not }equal
to $6,8,10,16,20,24,26,48,$ or $52$.

It is clear that Lemma~\ref{lem:part-of-size-n} cannot hold for
all small graphs. Indeed, by Frucht \emph{et al.} \citep{frucht1971groups},
we know that there exist precisely seven pairs $(n,k)$ for which
$\gp{n,k}$ is edge-transitive: these are $(n,k)=(4,1),(5,2),(8,3),(10,2),(10,3),(12,5)$,
and $(24,5)$. In these cases all edges have the same value for $\sigma$. 

Using a computer one can easily calculate the $\sigma$-partitions
for the remaining 373 $(n,k)$-pairs which correspond to small generalized
Petersen graphs. We have checked whether the corresponding $\sigma$-partition
of edges consists of more than one part and have determined that,
in addition to the above seven pairs, there are three additional members
of $\P$ for which all edges touch the same number of $8$-cycles.
These three additional cases are $\gp{3,1},\gp{13,5}$, and $\gp{26,5}$.
For $\gp{3,1}$, we trivially have $\sigma(e)=0$ for each edge $e$.
The remaining two cases contain $8$-cycles of types $C_{1},C_{5}$,
and $C_{7}$, and in these cases we have 
\begin{align*}
\left(\sigma_{j},\sigma_{s},\sigma_{i}\right) & =\left(1,2,5\right)+\left(5,2,1\right)+\left(2,4,2\right)=\left(8,8,8\right)\!,
\end{align*}
so $\sigma(e)=8$ for each edge $e$. It is worth noting that the
three exceptional pairs correspond to vertex-transitive members of
$\P$, which gives the following corollary.
\begin{cor}
In the family of generalized Petersen graphs, there exist precisely
ten members $\gp{n,k}$ such that all edges touch the same number
of eight-cycles. These are all vertex-transitive, in particular
\[
(n,k)\in\left\{ (3,1),(4,1),(5,2),(8,3),(10,2),(10,3),(13,5),(12,5),(24,5),(26,5)\right\}\!.
\]
\end{cor}
It is clear that our algorithms will never recognize these ten special
graphs, but will safely recognize any graph on $n$ vertices, where
$n\notin\left\{ 6,8,10,16,20,24,26,48,52\right\} $, in $O(n)$ time.
\medskip{}

\noindent \textbf{Acknowledgement}

\noindent   The authors would like to thank Prof. Martin Milani\v{c}
and the anonymous reviewers for their valuable comments during the
preparation of this paper.
The first author gratefully acknowledges 
the Slovenian Research Agency for funding project P1-0383 and program J1-1692, and 
the European Commission for funding the InnoRenew CoE project (Grant Agreement \#739574) under the Horizon2020 Widespread-Teaming program and the Republic of Slovenia (Investment funding of the Republic of Slovenia and the European Union of the European Regional Development Fund). 
\begin{algorithm}[p]
\begin{algorithmic}[1]
\Require{a cubic non-labeled graph $G$ on $2n$ vertices, and a set $U\subseteq V(G)$}
\If {$G\left[U\right]\not\simeq C_{n}$}
	\If {$|U|\neq n$ or $G\left[V(G)\setminus U\right]\not\simeq C_{n}$}
		\State return False
	\Else 
		\State $U\gets V(G)\setminus U$
	\EndIf
\EndIf
\State label $U=\left[u_{0},u_{1},\dots,u_{n-1}\right]$ cyclically w.r.t. $C_n$ \label{alg0:set-U}
\For {$0\leq i<n$}
	\State label the unique vertex in $N(u_i)\setminus U$ by $w_i$
\EndFor
\State $k\gets 1$
\While {$w_0w_{k-1}\in E(G)$}
	\State $k\gets k+1$ \label{alg0:k}
\EndWhile
\For {$1\leq i<n$}
	\If {$w_i w_{i+k} \notin E(G)$}
		\State return False
	\EndIf
\EndFor
\State return True
\end{algorithmic}
\caption{\label{alg:Extend(G,U)}Extend $(G,U)$}
\end{algorithm}
\begin{algorithm}[p]
\begin{algorithmic}[1]
\Require{a cubic graph $G$}
\State initialize $\mathcal{P}$ of type $\mathbb{N}\rightarrow 2^{E(G)}$
\For {$e\in E(G)$} \label{alg1:loop}
	\State $G'\gets G\left[\cup_{i=0}^4N_i(v)\right]$
	\State $s_e\gets \sigma_{G'}(e)$
	\State $\mathcal{P}(s_e)\gets \mathcal{P}(s_e)\cup \left\{e\right\}$ \label{alg1:edge-classification}
\EndFor
\State $U\gets$ a set of edges from Im$(\mathcal{P})$ with minimal positive cardinality \label{alg1:(ii)}
\If {$|U| \neq n$}
	\State return False
\EndIf
\If {$G[U]$ has $2n$ vertices} \Comment{$U$ is a perfect matching in $G$}
	\State $U\gets $ edges from largest component of $G-E(U)$
\EndIf
\If {$G\left[U\right]$ is a $2$-factor}
	\State return $\mathtt{extend}(G,U)$
\Else
	\State return False
\EndIf
\end{algorithmic}
\caption{\label{alg:1}The recognition procedure.}
\end{algorithm}

\newpage
\bibliographystyle{amcjoucc}
\bibliography{kronecker}

\begin{thebibliography}{10}
\expandafter\ifx\csname urlstyle\endcsname\relax
  \providecommand{\doi}[1]{doi:\discretionary{}{}{}#1}\else
  \providecommand{\doi}{doi:\discretionary{}{}{}\begingroup
  \urlstyle{rm}\Url}\fi

\bibitem{alspach1983classification}
B.~Alspach, {The classification of hamiltonian generalized Petersen graphs},
  \emph{J. Combin. Theory B} \textbf{34} (1983), 293--312.

\bibitem{alspach1981result}
B.~Alspach, P.~J. Robinson and M.~Rosenfeld, A result on hamiltonian cycles in
  generalized petersen graphs, \emph{J. Combin. Theory B} \textbf{31} (1981),
  225--231.

\bibitem{bondy1972variations}
J.~A. Bondy, Variations on the hamiltonian theme, \emph{Canad. Math. Bull}
  \textbf{15} (1972), 57--62.

\bibitem{MR3906715}
G.~Boruzanl\i~Ekinci and J.~B. Gauci, On the reliability of generalized
  {P}etersen graphs, \emph{Discrete Appl. Math.} \textbf{252} (2019), 2--9.

\bibitem{castagna1972every}
F.~Castagna and G.~Prins, Every generalized {Petersen} graph has a {Tait}
  coloring, \emph{Pac. J. Math.} \textbf{40} (1972), 53--58.

\bibitem{wilson92}
G.~Chartrand, H.~Hevia and R.~J. Wilson, The ubiquitous {Petersen} graph,
  \emph{Discrete Math.} \textbf{100} (1992), 303 -- 311.

\bibitem{coxeter1950self}
H.~S. Coxeter, Self-dual configurations and regular graphs, \emph{Bull. Amer.
  Math. Soc} \textbf{56} (1950), 413--455.

\bibitem{frucht1971groups}
R.~Frucht, J.~E. Graver and M.~E. Watkins, The groups of the generalized
  {P}etersen graphs, \emph{Proc. Cambridge Philos. Soc.} \textbf{70} (1971),
  211--218.

\bibitem{holton1993petersen}
D.~A. Holton and J.~Sheehan, \emph{The {Petersen} Graph}, volume~7 of
  \emph{Australian Mathematical Society Lecture Series}, Cambridge University
  Press, Cambridge, 1993.

\bibitem{MR3991621}
D.~D.~D. Jin and D.~G.~L. Wang, On the minimum vertex cover of generalized
  {P}etersen graphs, \emph{Discrete Appl. Math.} \textbf{266} (2019), 309--318.

\bibitem{klavvzar2003partial}
S.~Klav{\v{z}}ar and A.~Lipovec, Partial cubes as subdivision graphs and as
  generalized petersen graphs, \emph{Discrete Math.} \textbf{263} (2003),
  157--165.

\bibitem{MR4040935}
M.~Krnc and T.~Pisanski, Generalized {P}etersen graphs and {K}ronecker covers,
  \emph{Discrete Math. Theor. Comput. Sci.} \textbf{21} (2019), Paper No. 15,
  16.

\bibitem{MR4041982}
W.~Lu, C.~Yang and H.~Ren, Lower bound on the number of {H}amiltonian cycles of
  generalized {P}etersen graphs, \emph{Discuss. Math. Graph Theory} \textbf{40}
  (2020), 297--305.

\bibitem{nedela1995generalized}
R.~Nedela and M.~\v{S}koviera, {Which generalized Petersen graphs are Cayley
  graphs?}, \emph{J. Graph Theory} \textbf{19} (1995), 1--11.

\bibitem{petkovvsek2009enumeration}
M.~Petkov{\v{s}}ek and H.~Zakraj{\v{s}}ek, Enumeration of {I}-graphs: Burnside
  does it again, \emph{Ars Math. Contemp.} \textbf{2} (2009), 241--262.

\bibitem{MR3878281}
Y.~Sakamoto, Hamilton cycles in double generalized {P}etersen graphs,
  \emph{Discuss. Math. Graph Theory} \textbf{39} (2019), 117--123.

\bibitem{saravzin1997note}
M.~L. Sara\v{z}in, {A note on the generalized Petersen graphs that are also
  Cayley graphs}, \emph{J. Combin. Theory B} \textbf{69} (1997), 226--229.

\bibitem{MR3967513}
C.~Tong, X.~Lin and Y.~Yang, Equitable total coloring of generalized {P}etersen
  graphs {$P(n,k)$}, \emph{Ars Combin.} \textbf{143} (2019), 321--336.

\bibitem{watkins1969theorem}
M.~E. Watkins, {A theorem on Tait colorings with an application to the
  generalized Petersen graphs}, \emph{J. Combin. Theory} \textbf{6} (1969),
  152--164.

\bibitem{MR4014153}
Z.-B. Zhang and Z.~Chen, Length of cycles in generalized {P}etersen graphs,
  \emph{Acta Math. Univ. Comenian. (N.S.)} \textbf{88} (2019), 1093--1100.

\end{thebibliography}
\end{document}